\newif\ifdraft
\definecolor{labelkey}{gray}{0.5}
\newlength{\myarrowsize} 
\newenvironment{diagram*}[2]{%
\[%
\begin{tikzpicture}[>=cmto,baseline=(current bounding box.center),%
	to/.style={->,font=\scriptsize,cap=round},%
	into/.style={cmhook->,font=\scriptsize,cap=round},%
	onto/.style={-cmonto,font=\scriptsize,cap=round},%
	math/.style={matrix of math nodes, row sep=#2, column sep=#1,%
		text height=1.5ex, text depth=0.25ex}]%
}{%
\end{tikzpicture}%
\]%
\ignorespacesafterend%
}
\newcommand{\derR}{\mathbf{R}}
\newcommand{\shH}{\mathcal{H}}
\newcommand{\tensor}{\otimes}
\DeclareMathOperator{\Supp}{Supp}
\DeclareMathOperator{\codim}{codim}
\DeclareMathOperator{\Alb}{Alb}
\DeclareMathOperator{\Pic}{Pic}
\newcommand*{\sheafhom}{\mathscr{H}\kern -.5pt om}
\newcommand{\shf}[1]{\mathscr{#1}}
\def\overbar#1#2#3{{%
	\setbox0=\hbox{\displaystyle{#1}}%
	\dimen0=\wd0
	\advance\dimen0 by -#2 
	\vbox {\nointerlineskip \moveright #3 \vbox{\hrule height 0.3pt width \dimen0}%
		\nointerlineskip \vskip 1.5pt \box0}%
}}
\newcommand{\pu}{p^{\ast}}
\newcommand{\shF}{\shf{F}}
\newcommand{\shG}{\shf{G}}
\newcommand{\shO}{\shf{O}}
\let\@@seccntformat\@seccntformat
\renewcommand*{\@seccntformat}[1]{%
  \expandafter\ifx\csname @seccntformat@#1\endcsname\relax
    \expandafter\@@seccntformat
  \else
    \expandafter
      \csname @seccntformat@#1\expandafter\endcsname
  \fi
    {#1}%
}
\newcommand*{\@seccntformat@subsection}[1]{%
  \textbf{\csname the#1\endcsname.}
}
\let\@paragraph\paragraph
\renewcommand*{\paragraph}[1]{%
	\vspace{0.3\baselineskip}%
	\@paragraph{\textit{#1}}%
}
\newtheorem{theorem}[equation]{Theorem}
\newtheorem*{theorem*}{Theorem}
\newtheorem{lemma}[equation]{Lemma}
\newtheorem*{lemma*}{Lemma}
\newtheorem{corollary}[equation]{Corollary}
\newtheorem*{proposition*}{Proposition}
\theoremstyle{definition}
\newtheorem*{definition*}{Definition}
\newtheorem{remark}[equation]{Remark}
\newtheorem*{example*}{Example}
\newtheorem*{problem*}{Problem}
\theoremstyle{plain}
\newcommand{\theoremref}[1]{\hyperref[#1]{Theorem~\ref*{#1}}}
\newcommand{\lemmaref}[1]{\hyperref[#1]{Lemma~\ref*{#1}}}
\newcommand{\definitionref}[1]{\hyperref[#1]{Definition~\ref*{#1}}}
\newcommand{\propositionref}[1]{\hyperref[#1]{Proposition~\ref*{#1}}}
\newcommand{\conjectureref}[1]{\hyperref[#1]{Conjecture~\ref*{#1}}}
\newcommand{\corollaryref}[1]{\hyperref[#1]{Corollary~\ref*{#1}}}
\newcommand{\exampleref}[1]{\hyperref[#1]{Example~\ref*{#1}}}
\let\old@caption\caption
\renewcommand*{\caption}[1]{%
	\setcounter{figure}{\value{equation}}%
	\stepcounter{equation}%
	\old@caption{#1}\relax%
}
\newcounter{intro}
\newtheorem{intro-conjecture}[intro]{Conjecture}
\newtheorem{intro-corollary}[intro]{Corollary}
\newtheorem{intro-theorem}[intro]{Theorem}
\newcommand{\parref}[1]{\hyperref[#1]{\S\ref*{#1}}}
\newcommand*\if@single[3]{%
  \setbox0\hbox{${\mathaccent"0362{#1}}^H$}%
  \setbox2\hbox{${\mathaccent"0362{\kern0pt#1}}^H$}%
  \ifdim\ht0=\ht2 #3\else #2\fi
  }
\newcommand*\rel@kern[1]{\kern#1\dimexpr\macc@kerna}
\newcommand*\widebar[1]{\@ifnextchar^{{\wide@bar{#1}{0}}}{\wide@bar{#1}{1}}}
\newcommand*\wide@bar[2]{\if@single{#1}{\wide@bar@{#1}{#2}{1}}{\wide@bar@{#1}{#2}{2}}}
\newcommand*\wide@bar@[3]{%
  \begingroup
  \def\mathaccent##1##2{%
    \if#32 \let\macc@nucleus\first@char \fi
    \setbox\z@\hbox{$\macc@style{\macc@nucleus}_{}$}%
    \setbox\tw@\hbox{$\macc@style{\macc@nucleus}{}_{}$}%
    \dimen@\wd\tw@
    \advance\dimen@-\wd\z@
    \divide\dimen@ 3
    \@tempdima\wd\tw@
    \advance\@tempdima-\scriptspace
    \divide\@tempdima 10
    \advance\dimen@-\@tempdima
    \ifdim\dimen@>\z@ \dimen@0pt\fi
    \rel@kern{0.6}\kern-\dimen@
    \if#31
      \overline{\rel@kern{-0.6}\kern\dimen@\macc@nucleus\rel@kern{0.4}\kern\dimen@}%
      \advance\dimen@0.4\dimexpr\macc@kerna
      \let\final@kern#2%
      \ifdim\dimen@<\z@ \let\final@kern1\fi
      \if\final@kern1 \kern-\dimen@\fi
    \else
      \overline{\rel@kern{-0.6}\kern\dimen@#1}%
    \fi
  }%
  \macc@depth\@ne
  \let\math@bgroup\@empty \let\math@egroup\macc@set@skewchar
  \mathsurround\z@ \frozen@everymath{\mathgroup\macc@group\relax}%
  \macc@set@skewchar\relax
  \let\mathaccentV\macc@nested@a
  \if#31
    \macc@nested@a\relax111{#1}%
  \else
    \def\gobble@till@marker##1\endmarker{}%
    \futurelet\first@char\gobble@till@marker#1\endmarker
    \ifcat\noexpand\first@char A\else
      \def\first@char{}%
    \fi
    \macc@nested@a\relax111{\first@char}%
  \fi
  \endgroup
}
\def\Pic{{\rm Pic}}
\newtheorem*{thmA'}{Theorem~A$^\prime$}
\begin{document}

\title{Kodaira dimension of fibrations over abelian varieties}

\author[F.~Meng]{Fanjun~Meng}
\address{Department of Mathematics, Johns Hopkins University, 
3400 N. Charles Street, Baltimore, MD 21218, USA} 
\email{{\tt fmeng3@jhu.edu}}

\author[M.~Popa]{Mihnea~Popa}
\address{Department of Mathematics, Harvard University, 
1 Oxford Street, Cambridge, MA 02138, USA} 
\email{{\tt mpopa@math.harvard.edu}}


\subjclass[2010]{}

\keywords{Kodaira dimension; abelian varieties; positivity}

\thanks{MP was partially supported by the NSF grant DMS-2040378.}

\begin{abstract}
We prove an additivity result for the log Kodaira dimension of algebraic fiber spaces over abelian varieties, a superadditivity result for fiber spaces over varieties of maximal Albanese dimension, as well as a subadditivity result for log pairs.
\end{abstract}

\maketitle

\subsection{Introduction}
In this paper we show the validity,  in the case of algebraic fiber spaces over abelian varieties, of some conjectures on the behavior of Kodaira dimension proposed by the second author; see \cite[Conjectures 3.1 and 4.1]{Popa}. We also deduce 
(a stronger version of) Conjecture 3.1 in \emph{loc. cit.} in the more general case of fiber spaces over varieties of maximal Albanese dimension. We always work over the complex numbers.

\begin{intro-theorem}\label{main}
Let $f \colon X \to A$ be an algebraic fiber space, with $X$ a smooth projective variety and $A$ an abelian variety. Assume that $f$ is smooth over an open set $V \subseteq A$, and denote $U = f^{-1} (V)$ and the general fiber of $f$ by $F$. Then 
\begin{enumerate}
\item $\kappa (V) + \kappa (F) = \kappa (U) \ge \kappa (X)$.
\medskip
\item If $A \smallsetminus V$ has codimension at least $2$ in $A$ (e.g. if $f$ is smooth), then $\kappa (U) = \kappa (X) = \kappa (F)$. Moreover, there exists an \'etale cover $X'$ of $X$ such that
$$P_m (X') = P_m (F) \ge P_m (X) \,\,\,\,\,\,{\rm for ~all}\,\,\,\, m \ge 0.$$
\end{enumerate}
\end{intro-theorem}

Here, for a smooth quasi-projective variety $V$, $\kappa(V)$ denotes the log Kodaira dimension, defined as follows: for any smooth projective compactification $Y$ of $V$ such that $D = Y \smallsetminus V$ is a divisor with simple normal crossings, we have $\kappa (V) = \kappa (Y, K_Y + D)$. Also, for a smooth projective variety $X$ and $m\ge 0$, $P_m (X) = h^0 (X, \omega_X^{\otimes m})$ is the $m$-th plurigenus of $X$.

Item (2) recovers (and strengthens) in particular \cite[Corollary 3.1]{PS2}, stating that if $f$ is smooth, then 
$\dim X - \kappa (X) \ge \dim A$. As for the proof, thanks to the structure of effective divisors on abelian varieties, item (1) is in fact a consequence of (2). On the other hand, Iitaka's $C_{n,m}$ conjecture on the subadditivity of the Kodaira dimension is known for algebraic fiber spaces over abelian varieties by \cite{CP} (see also \cite{HPS}), giving $\kappa (X) \ge \kappa (F)$ for any such. Thus, up to some technicalities regarding the log Kodaira dimension, the key point is the last statement in (2), for which we employ techniques from \cite{LPS} regarding the Chen-Jiang decomposition, as well as a hyperbolicity-type result from \cite{PS3}. We prove in fact a stronger statement:

\begin{intro-theorem}\label{trivial}
Let $f \colon X \to A$ be a surjective morphism from a smooth projective variety to an abelian variety.
Assume that $f$ is smooth away from a closed set of codimension at least $2$ in $A$, and denote its general fiber by $F$.
Then for every $m\ge 1$ we have
$$f_* \omega_X^{\otimes m} \simeq \bigoplus_{i =1}^{P_m (F)} \alpha_i,$$
where $\alpha_i \in {\rm Pic}^0 (A)$ are (possibly repeated) torsion line bundles. In particular, if $f_* \omega_X^{\otimes m}$
is globally generated for some $m$, then 
$$f_* \omega_X^{\otimes m} \simeq \shO_A^{\oplus P_m(F)}.$$
\end{intro-theorem}

We remark that non-trivial torsion line bundles can indeed appear in the decomposition of $f_* \omega_X^{\otimes m}$ 
in Theorem \ref{trivial}. For instance, if $X$ is a bielliptic surface obtained as a quotient of a product of elliptic curves by a non-trivial finite group, then its Albanese map $f$ is smooth, while $P_1(X) = 0$ and $P_1 (F) = 1$; thus $f_* \omega_X$ is a non-trivial torsion line bundle.

Note that assuming Viehweg's $C_{n,m}^+$ conjecture, which holds when $F$ has a good minimal model by \cite{Kawamata2}, the result in Theorem \ref{main}(2) implies that a morphism $f \colon X \to A$ which is smooth in codimension $1$ satisfies ${\rm Var}(f) = 0$, i.e. $f$ is birationally isotrivial (cf. also \cite[Corollary 3.2]{PS2}). By analogy with Ueno's Conjecture $K$, we propose the following strengthening:

\begin{intro-conjecture}\label{conj:splitting}
Let $f \colon X \to A$ be an algebraic fiber space, with $X$ a smooth projective variety, $A$ an abelian variety, and general fiber $F$ with $\kappa (F) \ge 0$. If $f$ is smooth away from a closed set of codimension at least $2$ in $A$ , then there exists an isogeny  $A' \to A$ such that 
$$X\times_{A} A' \sim F\times A',$$
i.e. $X$ becomes birational to a product after an \'etale base change.
\end{intro-conjecture}

When $f$ is smooth with canonically polarized fibers, i.e. $K_F$ is ample, this is proved, with isomorphism, in \cite[\S2]{Kovacs}. A consequence of Theorem \ref{trivial} is that this in fact always holds at the level of canonical models; here $X_{{\rm can}, f}$ denotes the relative canonical model of $f$ (birational to $X$ over $A$ when $F$ is of general type), and $F_{\rm can}$ the canonical model of $F$.

\begin{intro-corollary}\label{splitting}
Under the assumptions of Conjecture \ref{conj:splitting}, there exists an isogeny  $A' \to A$ such that 
$$X_{{\rm can}, f}\times_{A} A' \simeq F_{{\rm can}} \times A'.$$
In particular Conjecture \ref{conj:splitting} holds when $F$ is of general type.
\end{intro-corollary}

\bigskip

It turns out that much of the result in Theorem \ref{main} holds more generally over any variety of maximal Albanese dimension, i.e. endowed with a (not necessarily surjective) generically finite morphism to an abelian variety. We state this separately, since the proof is a rather involved reduction to the case of abelian varieties, combined with Theorem \ref{main}.

\begin{intro-theorem}\label{mad}
Let $f \colon X \to Y$ be an algebraic fiber space, with $X$ and $Y$ smooth and projective, and $Y$ of maximal Albanese dimension. Assume that $f$ is smooth over an open set $V \subseteq Y$, and denote $U = f^{-1} (V)$ and the general fiber of $f$ by $F$. Then 
\begin{enumerate}
\item $\kappa (V) + \kappa (F) \ge \kappa (U) \ge \kappa (X)$.
\item If $Y \smallsetminus V$ has codimension at least $2$ in $Y$, then 
$$\kappa (X) = \kappa (U) = \kappa (V) + \kappa (F) = \kappa (Y) + \kappa (F).$$
\end{enumerate}
\end{intro-theorem}

As predicted in \cite[Conjecture 4.1]{Popa}, it should in fact always be the case that $\kappa (U) = \kappa (V) + \kappa (F)$; what is left is the standard conjectural subadditivity of the log Kodaira dimension, i.e. $\kappa (U) \ge \kappa (V) + \kappa (F)$. For this, unlike in the case of abelian varieties, further ideas related to log canonical pairs seem necessary.  Note that the usual subadditivity $\kappa (X) \ge \kappa (F) + \kappa (Y)$ is known to hold for any algebraic fiber space over $Y$ of maximal Albanese dimension, by \cite[Theorem 1.1]{HPS}.

In a different but related direction,  we remark that using the klt version provided in \cite{Meng} (see also \cite{Jiang} for related results) of the global generation result for direct images in \cite{LPS} used for the results above, one can go beyond the results of \cite{CP} and \cite{HPS}. Concretely, we show the validity of the \emph{log} Iitaka conjecture over an abelian variety $A$, in the case of divisors that do not dominate $A$ (though the statement can be phrased slightly more generally). For a divisor $D$ on a smooth projective variety $X$, we use the notation $P_m (X, D) : = h^0 (X, \omega_X(D)^{\otimes m})$.

\begin{intro-theorem}\label{subadditivity}
Let $f\colon X \to A$ be an algebraic fiber space, with $X$ a smooth projective variety, and $A$ an abelian variety. Denote by $F$ the 
general fiber of $f$. Let $D$ be a reduced effective divisor on $A$ and $E$ a reduced effective divisor on $X$ such that 
${\rm Supp}(f^*D)\subseteq E$. Then
$$\kappa (X,K_X + E) \ge \kappa (F) + \kappa (A,D).$$
More precisely, there exists an \'etale cover $\psi \colon X' \to X$ and a fixed integer $k \ge 1$ such that
$$P_{mk} (X', \psi^* E) \ge P_{mk} (F) \cdot P_m (A, D) \,\,\,\,\,\,{\rm for ~all}\,\,\,\, m \ge 0.$$
\end{intro-theorem}

Note that unlike in the previous results, we are not assuming $f$ to be smooth over $A \smallsetminus D$.
In this setting, the most general case of the log Iitaka conjecture allows for divisors $E$ that dominate $A$, in other words replacing $\kappa(F)$ by $\kappa (F, K_F + E|_{F})$. It would be solved with 
similar methods if extensions of the results in \cite{LPS} were available in the setting of log canonical pairs; this is however 
beyond our reach at the moment. 

One final word: in the results of this paper, morphisms that are smooth away from a closed subset of codimension at least $2$ behave just like morphisms that are smooth everywhere. However, working under this weaker hypothesis leads to 
additional technical arguments.  In order to isolate the main ideas, it may be helpful to assume global smoothness at a first reading.

\subsection{Background}
We will make use of the following two results from \cite{LPS}, which are shown in \emph{loc. cit.} to be equivalent.

\begin{theorem}[{\cite[Theorem B]{LPS}}]\label{thm:LPS1}
Let $f \colon X \to A$ be a morphism from a smooth projective variety to an abelian
variety. Then there exists an isogeny $\varphi \colon A' \to A$ such that if 
\[
\begin{tikzcd}
X' \dar{f'} \rar & X \dar{f} \\
A' \rar{\varphi} & A
\end{tikzcd}
\]
is the fiber product, then  $\varphi^* f_* \omega_X^{\otimes m} \simeq f'_* \omega_{X'}^{\otimes m}$ is globally generated for every $m \geq 1$.
\end{theorem}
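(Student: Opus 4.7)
The plan is to combine a Chen-Jiang type decomposition of $f_* \omega_X^{\otimes m}$ with Pareschi-Popa continuous global generation for M-regular sheaves, and then to trivialize all remaining finite-order data by a single isogeny of $A$. The fact that one and the same isogeny must work simultaneously for every $m \ge 1$ is what gives the statement its strength and dictates the structure of the argument.

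First I would show that for every $m \ge 1$ there is a direct sum decomposition
\[
f_* \omega_X^{\otimes m} \simeq \bigoplus_{i \in I_m} (p_i^* \mathcal{G}_{i,m}) \otimes \alpha_{i,m},
\]
where $p_i \colon A \to A_i$ are quotient morphisms of abelian varieties, $\mathcal{G}_{i,m}$ is an M-regular coherent sheaf on $A_i$, and $\alpha_{i,m} \in \Pic^0(A)$ is torsion. For $m=1$ this is the Chen-Jiang theorem. For $m \ge 2$ one reduces to the $m=1$ case on a cyclic cover $\tilde X \to X$ branched along a general divisor in $\lvert m K_X \rvert$ (or an auxiliary variant adapted to pluricanonical forms), using that $f_* \omega_X^{\otimes m}$ is a direct summand of $(f \circ \pi)_* \omega_{\tilde X}$. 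A crucial output of the Hodge-theoretic proof is that all the torsion points $\alpha_{i,m}$, as $m$ varies, lie in a fixed finitely generated subgroup of $\Pic^0(A)$ determined by $f$, so that a single isogeny $\varphi_0 \colon A'' \to A$ (say multiplication by some integer $N$) trivializes every $\alpha_{i,m}$ at once.

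Next, M-regularity implies, by Pareschi-Popa, that each $\mathcal{G}_{i,m}$ is continuously globally generated on $A_i$: one can find finitely many torsion line bundles $\beta_{i,j} \in \Pic^0(A_i)$ for which the evaluation map
\[
\bigoplus_j H^0\bigl(A_i, \mathcal{G}_{i,m} \otimes \beta_{i,j}\bigr) \otimes \beta_{i,j}^{-1} \longrightarrow \mathcal{G}_{i,m}
\]
is surjective, and a further isogeny that kills the $\beta_{i,j}$ then makes the pullback of $\mathcal{G}_{i,m}$ globally generated. Again a uniform version is needed: the finitely many quotients $A_i$ and the orders of the required $\beta_{i,j}$ should depend only on $f$, so that a single further isogeny $A' \to A''$ works for all $m$. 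With $\varphi \colon A' \to A$ the composition, every summand of the pulled back decomposition becomes globally generated, and hence so does $\varphi^* f_* \omega_X^{\otimes m}$. The final identification $\varphi^* f_* \omega_X^{\otimes m} \simeq f'_* \omega_{X'}^{\otimes m}$ is flat base change, applicable since $\varphi$ is \'etale and higher pluricanonical direct images commute with such base change.

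The main obstacle is exactly this uniformity in $m$: both the Chen-Jiang decomposition and the Pareschi-Popa step produce auxiliary torsion data that a priori depend on $m$, whereas the theorem demands that one isogeny suffices for all $m$ at once. Establishing that these data lie in a fixed arithmetic lattice attached to $f$—equivalently, that the Chen-Jiang decompositions for different $m$ are governed by a finite set of Hodge-theoretically defined subtori of $\Pic^0(A)$—is the heart of the argument, and requires genuine Hodge module input: generic vanishing, the Koll\'ar-type decomposition theorem for direct images of canonical bundles, and strictness of the Hodge filtration.
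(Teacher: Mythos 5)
First, a contextual point: the paper does not prove this statement at all --- it is quoted verbatim as \cite[Theorem B]{LPS}, and the surrounding text only records that it is equivalent to the Chen--Jiang decomposition statement (Theorem~\ref{thm:LPS2}). So the comparison is with the proof in \cite{LPS}. Your outline does reproduce the top-level architecture of that proof (Chen--Jiang decomposition, M-regularity and continuous global generation in the sense of Pareschi--Popa, an isogeny trivializing the finite torsion data, and flat base change with $\omega_{X'}\simeq \varphi'^*\omega_X$ since $\varphi'$ is \'etale), but the two steps that carry essentially all of the difficulty are asserted rather than proved, so as a proof there is a genuine gap.

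Concretely: (1) the reduction of the case $m\ge 2$ to $m=1$ via ``a cyclic cover branched along a general divisor in $\lvert mK_X\rvert$'' does not work as stated. The linear system $\lvert mK_X\rvert$ may well be empty while $f_*\omega_X^{\otimes m}\neq 0$, and even when it is nonempty, exhibiting $f_*\omega_X^{\otimes m}$ as a direct summand of the pushforward of a canonical bundle requires a branch divisor that is positive and generic relative to $f$; in practice one must twist by ample line bundles pulled back from the abelian variety (\`a la Viehweg and Popa--Schnell), pass to isogenies to make those twists harmless, and then descend the resulting decomposition back down --- this relative covering construction is the bulk of the argument in \cite{LPS} and your parenthetical ``auxiliary variant adapted to pluricanonical forms'' hides it. (2) The uniformity in $m$ --- your claim that all the torsion line bundles $\alpha_{i,m}$, for every $m$, lie in a fixed finite subgroup of $\Pic^0(A)$ attached to $f$ --- is exactly the point you yourself identify as the heart of the matter, and it is not supplied; in \cite{LPS} the single isogeny valid for all $m\ge 1$ comes out of the structure of the inductive covering argument, not from an a priori boundedness statement of the kind you invoke. (3) A smaller but real issue: M-regularity gives continuous global generation with respect to \emph{general} $\alpha\in\Pic^0$, not torsion ones, so ``an isogeny killing the $\beta_{i,j}$ makes the pullback globally generated'' needs the intermediate (standard, but necessary) observation that by openness of surjectivity of the finite evaluation map and density of torsion points one may choose the $\beta_{i,j}$ torsion; without some such step, note that killing finitely many twists of an ample line bundle does not make its pullback globally generated. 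The final base-change identification $\varphi^*f_*\omega_X^{\otimes m}\simeq f'_*\omega_{X'}^{\otimes m}$ is fine.
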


Recall that an (algebraic) fiber space is a surjective projective morphism with connected fibers.
We remark, as could have been done already in \cite{LPS}, that Theorem \ref{thm:LPS1} implies a strengthening of the subadditivity of the Kodaira dimension for algebraic fiber spaces over abelian varieties proved in \cite{CP} and later also in \cite{HPS}.

\begin{corollary}\label{old}
Let $f \colon X \to A$ be an algebraic fiber space over an abelian variety, with $X$ smooth and projective, and with general fiber $F$. Then there exists an \'etale cover $X' \to X$, base changed from an \'etale cover of $A$, such that 
$P_m (X') \ge P_m (F)$ for all $m \ge 1$.
\end{corollary}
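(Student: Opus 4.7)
The plan is to apply \theoremref{thm:LPS1} directly and then extract the inequality from the combination of global generation with a generic-rank computation for the direct image.

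First, I would invoke \theoremref{thm:LPS1} for $f \colon X \to A$ to obtain an isogeny $\varphi \colon A' \to A$ such that, setting $X' := X \times_A A'$ with induced morphism $f' \colon X' \to A'$, the sheaf $f'_* \omega_{X'}^{\otimes m}$ is globally generated on $A'$ for every $m \ge 1$. Since any isogeny of abelian varieties is \'etale in characteristic zero, the base change $X' \to X$ is an \'etale cover; in particular $X'$ is smooth and projective, and the general fiber of $f'$ is again isomorphic to $F$.

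Second, I would pin down the generic rank of $f'_* \omega_{X'}^{\otimes m}$ as a coherent sheaf on $A'$. Over the open set $U \subseteq A'$ where $f'$ is smooth, the family $X'|_U \to U$ is a smooth projective family with general fiber $F$, so the plurigenera $h^0((X')_a, \omega^{\otimes m})$ are constant equal to $P_m(F)$; by cohomology and base change, $f'_* \omega_{X'}^{\otimes m}|_U$ is then locally free of rank $P_m(F)$.

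Finally, the surjection $H^0(A', f'_* \omega_{X'}^{\otimes m}) \otimes \O_{A'} \twoheadrightarrow f'_* \omega_{X'}^{\otimes m}$ coming from global generation, evaluated at a point of $U$, forces $h^0(A', f'_* \omega_{X'}^{\otimes m}) \ge P_m(F)$. Combined with the identification $P_m(X') = h^0(X', \omega_{X'}^{\otimes m}) = h^0(A', f'_* \omega_{X'}^{\otimes m})$, this is exactly the claim. I do not expect a serious obstacle: essentially all content is already packaged in \theoremref{thm:LPS1}, and the remaining steps amount to a routine generic-rank / global-generation argument.
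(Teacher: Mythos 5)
Your proposal is correct and follows essentially the same route as the paper: the paper's proof also just applies Theorem~\ref{thm:LPS1} and observes that a globally generated sheaf has at least as many independent global sections as its (generic) rank, which equals $P_m(F)$. Your extra steps --- identifying the generic rank via base change over the smooth locus and evaluating the surjection $H^0(A', f'_*\omega_{X'}^{\otimes m}) \otimes \shO_{A'} \twoheadrightarrow f'_*\omega_{X'}^{\otimes m}$ at a general point --- are exactly the routine details the paper leaves implicit.
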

\begin{proof}
We consider the construction in Theorem \ref{thm:LPS1}, so that $f'_* \omega_{X'}^{\otimes m}$ is globally generated for every $m \geq 1$. The number of independent global sections of this sheaf 
 is therefore at least equal to its rank, which is equivalent to the inequality
$$P_m (X') \ge P_m (F).$$ 
\end{proof}

For the next statement, and for later use in the paper, recall that for a coherent sheaf $\shF$ on an abelian variety $A$, and for $i \ge 0$, we consider the  $i$-th cohomological support locus of $\shF$ given  by
$$V^i (A, \shF) = \{\alpha \in \Pic^0(A) ~|~  H^i(A, \shF \tensor \alpha) \neq 0 \}.$$
We will use the following standard terminology: 
\begin{itemize}
\item $\shF$ is a \emph{GV-sheaf} if $\codim_{\Pic^0 (A)} V^i (A, \shF) \ge i$ for all $i > 0$.
\item $\shF$ is an \emph{M-regular sheaf} if $\codim_{\Pic^0 (A)} V^i (A, \shF) > i$ for all $i > 0$.
\end{itemize}

\begin{theorem}[{\cite[Theorem C]{LPS}}]\label{thm:LPS2}
In the setting of \theoremref{thm:LPS1}, there exists a finite direct sum decomposition
\[
	f_* \omega_X^{\otimes m} \simeq \bigoplus_{i \in I} \bigl( \alpha_i \tensor \pu_i \shF_i \bigr),
\]
where each $p_i \colon A \to A_i$ is a quotient morphism with connected fibers to an
abelian variety, each $\shF_i$ is an M-regular coherent sheaf on $A_i$, and each
$\alpha_i \in \Pic^0(A)$ is a line bundle that becomes trivial when pulled back by
the isogeny in \theoremref{thm:LPS1}.
\end{theorem}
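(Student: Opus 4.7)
The plan is to deduce this Chen-Jiang-type decomposition from Theorem \ref{thm:LPS1} via the Fourier-Mukai formalism on the abelian variety $A$ and the structure theory of GV-sheaves developed by Pareschi-Popa. A crucial preliminary input is that $f_*\omega_X^{\otimes m}$ is a GV-sheaf on $A$ for every $m \ge 1$: for $m=1$ this is Hacon's theorem, and for $m \ge 2$ it follows from the pluricanonical positivity results of Popa-Schnell obtained via the theory of Hodge modules. Being GV is equivalent to the Fourier-Mukai transform of the derived dual being a sheaf concentrated in the extremal cohomological degree, which provides the structural handle for what follows.

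Working on the isogeny cover of Theorem \ref{thm:LPS1}, the pullback $\shG := \varphi^* f_*\omega_X^{\otimes m}$ is globally generated and remains a GV-sheaf on $A'$. By the Green-Lazarsfeld-Simpson theorem, the cohomological support loci $V^i(A', \shG) \subseteq \Pic^0(A')$ are finite unions of torsion translates of abelian subvarieties. Each positive-dimensional component corresponds dually to a nontrivial quotient $q' \colon A' \to A'/B$, along which the corresponding piece of $\shG$ is pulled back from a sheaf on $A'/B$; the ``generic'' component with no such constraint contributes an M-regular summand supported on all of $A'$.

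Next, I would promote this filtration to a genuine direct sum decomposition. Global generation of $\shG$, together with the Pareschi-Popa vanishing of $\Ext^1$ between pullbacks from distinct quotients (after twisting by generic elements of $\Pic^0$), produces the required splittings. Iterating the extraction, $\shG$ decomposes as a finite direct sum of pieces of the form $(p_i')^* \shF_i'$ with $\shF_i'$ an M-regular sheaf on the quotient $A_i' = A'/B_i$.

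Finally, to descend from $A'$ to $A$, I would exploit the canonical $\ker(\varphi)$-equivariant structure on $\shG = \varphi^* f_*\omega_X^{\otimes m}$ and decompose each summand into isotypic components under this action. Each character of $\ker(\varphi)$ corresponds on $A$ to a twist by a torsion line bundle $\alpha_i \in \Pic^0(A)$ that is trivialized by $\varphi$, producing the asserted decomposition $f_*\omega_X^{\otimes m} \simeq \bigoplus_i (\alpha_i \otimes p_i^* \shF_i)$. The main obstacle is step three, the promotion of a filtration into an honest direct sum: this is where the global generation provided by Theorem \ref{thm:LPS1} is essential, since the splitting cannot be produced intrinsically on $A$ itself without first passing to the isogeny cover.
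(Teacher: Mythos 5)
Before comparing, note that the paper itself does not prove this statement: it is quoted directly from \cite{LPS} (Theorem C there), and the paper only records that \cite{LPS} prove it and show it to be equivalent to \theoremref{thm:LPS1}. So your proposal has to be judged as an attempted proof of the cited theorem, and as such it has two genuine gaps rather than being a variant of an argument in this paper.

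First, the assertion that the loci $V^i(A',\shG)$ are finite unions of torsion translates of abelian subvarieties is not given by Green--Lazarsfeld--Simpson: their theorem concerns $\omega_X$ (equivalently the case $m=1$, via Koll\'ar's splitting), while for $m\ge 2$ this structural statement is itself one of the main theorems of \cite{LPS} (their Theorem A), proved as part of the same package as Theorems B and C; invoking it as an off-the-shelf input makes the argument circular. Moreover, even granting that structure, a component of $V^i$ does not come with a canonically attached ``piece'' of $\shG$ pulled back from the dual quotient --- producing such direct summands is precisely the content of the theorem, so your step two presupposes what is to be proved. Second, and more seriously, the splitting step has no mechanism behind it. There is no vanishing of $\Ext^1$ between allowable Chen--Jiang pieces: already $\Ext^1_A(\shO_A,\shO_A)=H^1(A,\shO_A)\neq 0$, and both copies of $\shO_A$ are legitimate summands (pullbacks of M-regular sheaves from the zero-dimensional quotient); nor can one twist the would-be pieces by generic elements of $\Pic^0$, since they are subsheaves/quotients of one fixed sheaf and the twists occurring in a Chen--Jiang decomposition are specific torsion line bundles. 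The nonsplit extension of $\shO_A$ by $\shO_A$ is a GV-sheaf admitting no Chen--Jiang decomposition, so any proof must use global generation, or better the geometric origin of $f_*\omega_X^{\otimes m}$, in an essential way exactly at this point; you acknowledge this, but ``global generation is essential here'' is not an argument, and you never formulate, let alone prove, an implication of the shape ``GV and globally generated after isogeny implies Chen--Jiang decomposition.'' It is not clear such a purely sheaf-theoretic implication holds, and the actual proof in \cite{LPS} does not obtain the splitting formally from positivity: for $m=1$ it rests on the Hodge-module/Chen--Jiang results (cf. \cite{HPS}), and the case $m\ge 2$ is reduced to it by covering-type constructions, with the equivalence between the global generation statement and the decomposition itself being a nontrivial result there. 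The final descent from $A'$ to $A$ via isotypic decomposition under $\ker\widehat{\varphi}$ is reasonable in spirit, but it too presupposes a uniqueness/equivariance property of the decomposition on $A'$ that would have to be established.
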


The direct sum decomposition in this last theorem goes under the name of a \emph{Chen-Jiang decomposition}.
The only special fact about $M$-regular sheaves that we will use here is that they are ample, as shown in \cite{Debarre}; similarly, $GV$-sheaves are nef, as shown in \cite{PP2}.

\smallskip

We will also need a hyperbolicity-type result, proved in this generality in \cite{PS3}; it relies on important
ideas and results of Viehweg-Zuo and Campana-P\u aun described in \emph{loc. cit.}, as well as on the theory of Hodge modules. For simplicity, in the following statement we combine two results, and only state the special consequence needed in this paper.\footnote{The statement works in a more general setting; it is comparatively much simpler however, avoiding semistable reduction tricks, when the base is a priori known not to be uniruled.}

\begin{theorem}[{\cite[Theorem 4.1 and Theorem 3.5]{PS3}}]\label{thm:PS3}
Let $f \colon X \to Y$ be an algebraic fiber space between smooth projective varieties, such that $Y$ is not uniruled.
Assume that $f$ is smooth over the complement of a closed subset $Z \subset Y$, and that there exists $m\ge 1$ such that ${\rm det} f_* \omega_{X/Y}^{\otimes m}$ is big. Denote by $D$ the union of the divisorial components of $Z$.  Then the pair $(Y,D)$ is of log general type, i.e. the line bundle $\omega_Y (D)$ is big.
\end{theorem}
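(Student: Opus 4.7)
The plan is to follow the Viehweg-Zuo strategy combined with the Campana-P\u aun hyperbolicity theorem, as adapted in \cite{PS3}. The overall scheme is: first convert the positivity of the direct image sheaf into the existence of a big rank-one subsheaf of some symmetric power of $\Omega_Y^1(\log D)$ (a Viehweg-Zuo sheaf), and then use the hypothesis that $Y$ is not uniruled to promote this to bigness of the log canonical bundle $\omega_Y(D)$.

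For the Viehweg-Zuo step, I would first reduce from the pluricanonical to the canonical case via a Viehweg cyclic cover: taking a sufficiently divisible section of a high power of $\omega_{X/Y}$ and forming the associated cyclic cover produces, after log resolution, a new family $\tilde f : \tilde X' \to Y$ with $f_* \omega_{X/Y}^{\otimes m}$ appearing as a direct summand of $\tilde f_* \omega_{\tilde X'/Y}$. The variation of Hodge structure on $Y \setminus D$ coming from the middle cohomology of $\tilde f$ gives a Hodge bundle with canonical Deligne extension $\mathcal{H}$ on $Y$, whose lowest graded piece is essentially $\tilde f_* \omega_{\tilde X'/Y}$. Iterating the log Kodaira--Spencer maps yields $\shO_Y$-linear morphisms
$$\mathcal{H}^{p,0} \to \mathcal{H}^{p-1,1} \tensor \Omega_Y^1(\log D) \to \cdots \to \mathcal{H}^{0,p} \tensor \Sym^p \Omega_Y^1(\log D),$$
and passing to determinants of suitable rank-one subquotients would extract a big line bundle inside $\Sym^N \Omega_Y^1(\log D)$ from the bigness of $\det f_* \omega_{X/Y}^{\otimes m}$.

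With a Viehweg-Zuo sheaf in hand, I would invoke the log Campana-P\u aun theorem (appearing as Theorem~3.5 of \cite{PS3}), which says that if $Y$ is a smooth projective variety that is not uniruled, $D$ is a reduced snc divisor, and $\Sym^N \Omega_Y^1(\log D)$ contains a big coherent subsheaf of rank one, then $\omega_Y(D)$ is big. This step is essentially formal once the input has been produced.

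The principal obstacle is the construction of the Viehweg-Zuo sheaf under the weak hypothesis that only a determinant of the direct image is big, rather than under the classical assumption of maximal variation: one must work with the rank-one quotients carefully to preserve positivity under iteration of Kodaira--Spencer, and the saturation procedure in the log cotangent bundle has to be tracked explicitly. A second technical difficulty is handling the codimension-at-least-two components of the degeneration locus $Z$, which contribute no poles to $\Omega_Y^1(\log D)$ but nonetheless require verifying that both the Deligne extension and the Hodge-theoretic positivity arguments are unaffected by such higher-codimension degenerations. Here Saito's formalism of mixed Hodge modules is the essential input, allowing one to extend the Hodge bundle and its Kodaira--Spencer iterates across the non-divisorial bad locus, and it is precisely this input that makes the argument work in the generality claimed.
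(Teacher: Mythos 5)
Your outline follows exactly the route of the cited source \cite{PS3} (construct a Viehweg--Zuo type big rank-one subsheaf of a tensor/symmetric power of $\Omega_Y^1(\log D)$ from the bigness of $\det f_*\omega_{X/Y}^{\otimes m}$ via Hodge-module techniques, then apply the Campana--P\u aun type Theorem~3.5 for non-uniruled $Y$), which is precisely what the paper relies on: it gives no independent proof, only the citation together with the remark that the argument is unaffected by the codimension-at-least-two part of $Z$ since all objects can be constructed after removing such a subset. Your closing paragraph addresses exactly that remark, so the proposal is correct and essentially the same approach.
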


\begin{remark}
The theorem above is stated in \emph{loc. cit.} only when $Z = D$, but the proof shows more generally the statement above, as all the objects it involves can be constructed from $Y$ with any closed subset of codimension at least $2$ removed.
\end{remark}

Finally, we will sometimes make use of a more flexible interpretation of the log Kodaira dimension on ambient varieties 
of non-negative Kodaira dimension.

\begin{lemma}\label{lkd}
Let $X$ be a smooth projective variety with $\kappa(X)\ge 0$ , $Z\subseteq X$ a closed reduced subscheme, and $V = X \smallsetminus Z$. Assume that $Z=W\cup D$ where $\codim_X W\ge 2$ and $D$ is a divisor. Then
$$\kappa(V)=\kappa(X , K_X +D).$$
\end{lemma}
\begin{proof}
Take a resolution $\mu\colon Y\to X$  of $(X, Z)$ such that $\mu$ is an isomorphism over $X\smallsetminus Z$; in particular 
$\mu^{-1}(Z)$ is a divisor with simple normal crossing support $G$. Hence by definition
$$\kappa(V )=\kappa(Y, K_Y+ G).$$
Since $\kappa(X)\ge 0$, we deduce that $K_X\sim_{\mathbb{Q}} E$, where $E$ is an effective $\mathbb{Q}$-divisor on $X$. We therefore have that 
$$K_Y\sim \mu^*K_X+F\sim_{\mathbb{Q}} \mu^*E+F,$$
 where $F$ is an effective and $\mu$-exceptional divisor on $Y$ such that $\Supp(F)={\rm Exc}(\mu)$. Now
$$\kappa(X, K_X+D)=\kappa(Y, \mu^*(K_X+D)+F)=\kappa(Y, \mu^*E+\mu^*D+F)$$
while
$$\kappa(Y, K_Y+ G)=\kappa(Y, \mu^*E+F+ G).$$
Since $\Supp(\mu^*E+\mu^*D+F)=\Supp(\mu^*E+F+G)$, we obtain the conclusion.
\end{proof}

\subsection{Proof of Theorem \ref{trivial}}
Let $f\colon X \to A$ be a surjective morphism of smooth projective varieties, with $A$ an abelian variety, and let $F$ be 
the general fiber of $f$.  Assume that $f$ is smooth over an open subset $V\subseteq A$ whose complement has codimension at least $2$. We divide the proof of Theorem \ref{trivial} into two steps:

\noindent
\emph{Step 1.} 
We first prove the last assertion; namely if $f_* \omega_X^{\otimes m}$
is globally generated for some $m\ge 1$, then 
\begin{equation}\label{eq1}
f_* \omega_X^{\otimes m} \simeq \shO_A^{\oplus P_m(F)}.
\end{equation}
We first prove this result when $f$ is a fiber space.  We use the statement and notation of Theorem \ref{thm:LPS2}, and 
we claim in fact that $\dim A_i=0$ for all $i\in I$ appearing in the decomposition in that theorem.
Since $f_*\omega_X^{\otimes m}$ is globally generated and the rank of $f_* \omega_X^{\otimes m}$ is $P_m(F)$, this immediately implies ($\ref{eq1}$).

Assume on the contrary that we have $\dim A_k>0$ for one of the quotients $p_k \colon A \to A_k$. (This includes the case when $A_k = A$.) Denote the kernel of $p_k$ by $C$; this is an abelian subvariety of $A$. By Poincar\'e's complete reducibility theorem, there exists an abelian variety $B\subseteq A$ such that $B+C=A$ and $B\cap C$ is finite, so that the natural morphism 
$\varphi\colon B\times C\to A$ is an isogeny. We consider the following commutative diagram, $q$ is the projection onto $C$, $c\in C$ is a general point, and $f'$ and $f'_{c}$ are obtained by base change from $f$ via $\varphi$ and the inclusion 
$i_c$ of the fiber $B_c$ of $q$ over $c$ respectively:
\[
\begin{tikzcd}
X'_c\rar \dar{f'_c} & X' \rar{\varphi'} \dar{f'} \rar & X \dar{f}  \\
B_c \dar \rar{i_c} & B\times C \dar{q} \rar{\varphi} & A \dar{p_k} \\
\{c\}\rar& C  & A_k
\end{tikzcd}
\]
Note that by construction the composition
$$\psi_c := p_k\circ\varphi\circ i_c \colon B_c \to A_k$$ 
is an isogeny. Furthermore, $X'$ is smooth, since $\varphi$ is \'etale. We denote by $Z \subset A$ the closed subset of 
codimension $\ge 2$ such that $f$ is smooth over $A \smallsetminus Z$. If $Z' := \varphi^{-1} (Z)$, then $Z'$ has codimension $\ge 2$ in $B \times C$ as well, and $f'$ is smooth over its complement. Moreover, $c$ can be chosen sufficiently general so that $X'_{c}$ is smooth (by generic smoothness applied to $q \circ f'$) and $\codim_{B_c}  i_c^{-1}(Z')\ge 2$ as well, hence $f'_c$ inherits the same property as $f$.

\smallskip
\noindent
{\bf Claim.} The line bundle $\det \big( (f'_c)_*\omega_{X'_c}^{\otimes m}\big)$ is big (hence ample, as $B_c$ is an abelian variety).

Assuming the Claim, we can immediately conclude the proof. Indeed, by Theorem \ref{thm:PS3} this would imply
that $\omega_{B_c}$ is big, which is a contradiction and shows that we cannot have $\dim A_k > 0$.

We are left with proving the Claim. To this end, applying the base change theorem as in \cite[Proposition 4.1]{LPS}, since $c$ is general we have
\begin{equation}\label{eq4}
(f'_c)_*\omega_{X'_c}^{\otimes m} \simeq (f'_c)_*(\omega_{X'}^{\otimes m}|_{X'_c})\simeq i_c^*(f'_*\omega_{X'}^{\otimes m})\simeq i_c^*\varphi^* ( f_* \omega_X^{\otimes m}).
\end{equation}
To analyze this, we need to look more carefully at the decomposition of $f_*\omega_X^{\otimes m}$ from Theorem \ref{thm:LPS2}.
Since $f_*\omega_X^{\otimes m}$ is globally generated, we deduce that $\alpha_k\otimes p_k^*\shF_k$ is globally generated, and in particular $h^0(A, \alpha_k\otimes p_k^*\shF_k)>0$. It follows that $\alpha_k$ is trivial on the general, hence every, fiber of $p_k$, and so 
there exists a torsion line bundle $\beta_k\in\Pic^0(A_k)$ such that $\alpha_k\simeq p_k^*\beta_k$. Moreover, since $p_k$ has connected fibers,
$${p_k}_*(\alpha_k\otimes p_k^*\shF_k)\simeq \beta_k\otimes\shF_k$$
by the projection formula (applied to the $0$-th cohomology of $\derR {p_k}_*(\alpha_k\otimes p_k^*\shF_k)$).
If we denote $\shG : = \bigoplus_{i\neq k}(\alpha_i\otimes p_i^*\shF_i)$, we then have 
\begin{equation}\label{eq3}
f_*\omega_X^{\otimes m}\simeq p_k^* (\beta_k\otimes\shF_k) \oplus \shG.
\end{equation}
These summands have various positivity properties. Since $\beta_k\otimes\shF_k$ is an $M$-regular sheaf on $A_k$, it is ample. On the other hand,  since $f_*\omega_X^{\otimes m}$ is a $GV$-sheaf by \cite[Theorem 1.10]{PS1}, it is nef, hence so is $\shG$. (For all this, see the comments after Theorem \ref{thm:LPS2}.) Since a priori they might not be locally free, it is also useful to record that $\shG$ is a weakly positive sheaf, since $f_*\omega_X^{\otimes m}$ is so 
by \cite[Theorem III]{Viehweg}.

Using ($\ref{eq4}$) and ($\ref{eq3}$), we deduce that
$$(f'_c)_*\omega_{X'_c}^{\otimes m}\simeq i_c^*\varphi^* ( f_* \omega_X^{\otimes m})\simeq \psi_c^*(\beta_k\otimes\mathcal{F}_k) \oplus i_c^* \varphi^*\shG.$$
Note in passing that if $f$ is smooth, and so also $f'$ and $f'_c$ are smooth to begin with, then all of the sheaves above are locally free (by the deformation invariance of plurigenera), so by the previous paragraph this is a sum of an ample and a nef vector bundle, hence its determinant is clearly ample. In general we have to be slightly more careful in order to draw the same conclusion. Since $\psi_c$ is an isogeny, we deduce that $\psi_c^*(\beta_k\otimes\mathcal{F}_k)$ is an ample sheaf as well, while $i_c^* \varphi^*\shG$ is weakly positive just as $\shG$ (being a summand of $(f'_c)_*\omega_{X'_c}^{\otimes m}$). In other words, we have that 
$$(f'_c)_*\omega_{X'_c}^{\otimes m}\simeq \shH_1 \oplus \shH_2,$$ 
with $\shH_1$ ample and $\shH_2$ weakly positive. But an ample sheaf is big in the sense of Viehweg 
(see e.g. \cite[\S2 a)]{Debarre} and \cite[\S5 p.293]{Mori}), and so its determinant\footnote{Recall that the determinant of a torsion-free sheaf $\shF$ of generic rank $r$ is the line bundle $(\wedge^r \shF)^{\vee \vee}$, i.e. the  unique extension of the determinant line bundle from the big open set on which $\shF$ is locally free.} $\det \shH_1$ is a big line bundle by \cite[Lemma 3.2(iii)]{Viehweg2} (see also \cite[5.1.1]{Mori}). On the other hand, 
$\det \shH_2$ is weakly positive, e.g. also by \cite[Lemma 3.2(iii)]{Viehweg2};  for a line bundle  this is the same as being pseudoeffective. Their tensor product is therefore also big, hence finally the line bundle $\det \big( (f'_c)_*\omega_{X'_c}^{\otimes m}\big)$ is big. This concludes the proof in the case of fiber spaces.

If $f$ is not a fiber space, we consider its Stein factorization $f = g\circ h$.
Here $B$ is a normal projective variety, $h\colon X\to B$ is a fiber space, and $g\colon B\to A$ is a finite surjective morphism. Note that $h$ smooth over $B\smallsetminus g^{-1}(Z)$ and $g$  is \'etale over $A\smallsetminus Z$; see e.g. 
\cite[Lemma 2.4]{FG}. By the purity of the branch locus, it follows that $g$ is actually  \'etale over $A$, and thus $B$ is also an abelian variety. Moreover the canonical morphism
$$g^*g_*h_*\omega_X^{\otimes m}\to h_*\omega_X^{\otimes m}$$
is surjective, which implies that $h_*\omega_X^{\otimes m}$ is globally generated as well. Using what we showed above
for fiber spaces that are smooth in codimension $1$, we deduce that
$$h_*\omega_X^{\otimes m}\simeq \shO_B^{\oplus P_m(H)},$$ 
where $H$ denotes the general fiber of $h$.  Furthermore, we have
$$g_*\shO_{B}\simeq\bigoplus_{\alpha\in{\rm Ker} (\hat{g})}\alpha$$
where $\hat{g}\colon {\rm Pic}^0 (A) \to {\rm Pic}^0 (B)$ is the dual isogeny of $g$, hence $f_*\omega_X^{\otimes m}$ is a direct sum of torsion line bundles on $A$. Since $f_*\omega_X^{\otimes m}$ is globally generated, we obtain the same conclusion ($\ref{eq1}$).

\noindent
\emph{Step 2.} 
We deduce  next the general case from the statement for globally generated sheaves proved in Step 1. By the same Stein factorization argument as above, we may assume that $f$ is a fiber space.

We now use the statement and notation of Theorem \ref{thm:LPS1}. By Step 1, we have that
$$\varphi^* f_* \omega_X^{\otimes m} \simeq f'_*\omega_{X'}^{\otimes m}\simeq \shO_{A'}^{\oplus P_m(F)},$$
which implies that $f_* \omega_X^{\otimes m}$ is a direct summand of $(\varphi_*\shO_{A'})^{\oplus P_m(F)}$. Since
$$\varphi_*\shO_{A'}\simeq\bigoplus_{\beta\in{\rm Ker} (\hat{\varphi})}\beta$$
where $\hat{\varphi}$ is the dual isogeny of $\varphi$, we deduce that  
$$V^0(A, f_* \omega_X^{\otimes m})\subseteq{\rm Ker} (\hat{\varphi})$$ 
and in particular $\dim V^0(A, f_* \omega_X^{\otimes m})=0$. 

We consider again the Chen-Jiang decomposition of $f_* \omega_X^{\otimes m}$ provided by 
Theorem \ref{thm:LPS2}. By \cite[Lemma 3.3]{LPS}, we have 
$$V^0(A, f_* \omega_X^{\otimes m})=\bigcup_{i\in I}\alpha_i^{-1}\otimes p_i^*\Pic^0(A_i).$$
Since $\dim V^0(A, f_* \omega_X^{\otimes m})=0$, we deduce that $\dim A_i=0$ for all $i\in I$, which in turn leads to a 
decomposition
$$f_* \omega_X^{\otimes m} \simeq \bigoplus_{i =1}^{P_m (F)} \alpha_i$$
with $\alpha_i$ torsion line bundles on $A$, as desired.

\subsection{Proof of Theorem \ref{main}}
By possibly passing to a further log resolution, we may assume from the beginning that $E : = X \smallsetminus U$ is a simple normal crossing divisor, and therefore
$$\kappa (U ) = \kappa (X, K_X + E).$$

We start with part (2), which is mostly a quick application of Theorem \ref{trivial}. Indeed, we consider the \'etale base change $X' \to X$ as in Theorem \ref{thm:LPS1}, so that $f'_* \omega_{X'}^{\otimes m}$ is globally generated for $m\ge 1$. According to Theorem \ref{trivial}, it follows that 
$$f'_* \omega_{X'}^{\otimes m} \simeq \shO_{A'}^{\oplus P_m (F)},$$
which gives $P_m (X') = P_m (F)$, and in particular $\kappa (X) = \kappa (X') = \kappa (F)$.
We also obviously have $P_m (X') \ge P_m (X)$.

We are left with proving that $\kappa (X) = \kappa (U)$. Given the interpretation of $\kappa (U)$ above, it suffices to 
show that 
\begin{equation}\label{eqn:exc}
f_* \omega_X^{\otimes m}\simeq f_* (\omega_X^{\otimes m}(mE)) \,\,\,\,\,\,{\rm for ~all} \,\,\,\,m\ge 1.
\end{equation}
We consider the diagram
\[
\begin{tikzcd}
U \dar{f_U} \rar{i} & X \dar{f} \\
V \rar{j} & A
\end{tikzcd}
\]
where the horizontal maps are the inclusions, and $f_U$ is the restriction of $f$ to $U$.
By Theorem \ref{trivial} we know that $f_* \omega_X^{\otimes m}$ is locally free for every $m\ge 1$, and therefore the natural morphism
$$f_* \omega_X^{\otimes m}\to j_*j^*f_* \omega_X^{\otimes m}$$
is an isomorphism, since $\codim_A (A \smallsetminus V)\ge 2$. By base change we deduce that
$$f_* \omega_X^{\otimes m}\simeq  j_*{f_U}_*i^*\omega_X^{\otimes m} \simeq  f_*i_*i^*\omega_X^{\otimes m}\simeq  \bigcup_{k\ge 0}f_*(\omega_X^{\otimes m}(kE)),$$
which immediately implies ($\ref{eqn:exc}$). (Note that 
$$i_*i^*\omega_X^{\otimes m} \simeq \omega_X^{\otimes m} \otimes_{\shO_X} \shO_X(*E) = \bigcup_{k\ge 0} \omega_X^{\otimes m}(kE),$$ 
where $ \shO_X(*E)$ is the quasi-coherent sheaf of functions with poles of any order along $E$.)

\medskip

The rest of the section is devoted to proving (1), essentially by reducing it to (2). 
We consider the closed subset $Z:=A\smallsetminus V$, and decompose it as $Z = D\cup W$, where each irreducible component of $D$ has codimension $1$, while $W$ has codimension at least $2$. By Lemma \ref{lkd} we have
$$\kappa(V) = \kappa(A, D).$$
We conclude that in order to finish the proof, we need to show that
\begin{equation}\label{addition-LGT}
\kappa(A, D) + \kappa (F) =  \kappa (U) = \kappa (X, K_X + E).
\end{equation}
If $D = \emptyset$, then we are done by (2), so we assume that $D$ is non-trivial. There are two cases, according to whether $D$ is an ample divisor or not.

If $D$ is ample, then $\kappa (A, D)  = \dim A$, and the formula in ($\ref{addition-LGT}$) follows from the additivity of 
the log Kodaira dimension for fiber spaces over varieties of log general type \cite[Corollary 2]{Maehara} (see also the 
earlier \cite[Theorem 30]{Kawamata} for the case $\kappa (U) \ge 0$).

If $D$ is not ample, a well-known structural fact says that there exist a fibration $q\colon A \to B$ of abelian varieties and an ample effective divisor $H$ on $B$ such that $\dim A > \dim B$ and $D=q^*H$.
\[
\begin{tikzcd}
X \rar{f} \arrow[bend right=40]{rr}{g} & A \rar{q} & B
\end{tikzcd}
\]
For a point $b \in B$, we denote the fiber of $g$ over $b$ by $X_b$, and the fiber of $q$ over $b$ by $A_b$. Consider the following open algebraic fiber space\footnote{This means that $t$ is a dominant morphism of smooth quasi-projective varieties, with irreducible general fiber.}
\[
\begin{tikzcd}
U=X\smallsetminus E \rar \arrow[bend right=40]{rr}{t} & A\smallsetminus D \rar & B \smallsetminus H
\end{tikzcd}
\]
By Lemma \ref{lkd}, we have 
$$\kappa(B \smallsetminus H)=\kappa (B, H) = \dim B,$$ 
and since $q$ is a fiber space, we also have 
$$\kappa (V) = \kappa (A, D)=\kappa (B, H).$$  
On the other hand, \cite[Corollary 2]{Maehara} used above continues to apply in this more general setting, to give 
$$\kappa (B \smallsetminus H) + \kappa (U_b) = \kappa (U),$$
where $b \in B \smallsetminus H$. (Note that this result does not require the smoothness of $t$.) We can choose $b$ sufficiently general so that $\codim_{A_b} (A_b\cap W) \ge 2$. Since $A_b \cap D = \emptyset$, we then also have 
$$\codim_{A_b}  (A_b \cap Z)\ge 2.$$
Moreover, $b$ can be chosen sufficiently general so that $E_b:=E|_{X_b}$ has simple normal crossings, and thus 
$\kappa(U_b)=\kappa(X_b, K_{X_b}+E_b)$.
We obtain a fiber space $f_b\colon X_b\to A_b$ with general fiber $F$,  induced by the restriction of $f$ to $X_b$, which is smooth over the complement of the closed subset $A_b \cap Z$ of codimension at least $2$. We have 
$$f_b^{-1}(A_b \cap Z)=f^{-1}(Z)\cap X_b=E\cap X_b=E_b.$$
By part (2) we deduce that $\kappa (U_b)=\kappa(X_b\smallsetminus E_b)=\kappa(F)$, and thus
finally 
$$\kappa (V) + \kappa (F)=\kappa (B \smallsetminus H)+\kappa (U_b)=\kappa (U).$$

\subsection{Proof of Corollary \ref{splitting}}
Under the assumption that $f \colon X \to A$ is smooth away from a closed set of codimension at least $2$, by Theorem \ref{trivial} and 
Theorem \ref{thm:LPS1} we may assume (after having passed to an \'etale base change $A' \to A$, and noting that the relative canonical model is compatible with this base change) that we have
$$f_* \omega_X^{\otimes m} \simeq \shO_A^{\oplus P_m(F)}$$
for all $m\ge 0$ (hence $P_m (X) = P_m (F)$). We deduce that the evaluation morphisms
$$H^0(X, \omega_X^{\otimes m})\otimes_\mathbb{C}\shO_A\simeq H^0(A, f_* \omega_X^{\otimes m})\otimes_\mathbb{C}\shO_A\to f_* \omega_X^{\otimes m}$$
are all isomorphisms. Putting these together we obtain an $\shO_A$-algebra isomorphism
$$\big(\bigoplus_{m\ge0} H^0(X, \omega_X^{\otimes m})\big)\otimes_\mathbb{C}\shO_A\simeq \bigoplus_{m\ge0}f_* \omega_X^{\otimes m},$$
 since the evaluation morphisms for each $f_* \omega_X^{\otimes m}$ are compatible with the multiplicative structures on the two 
sides. Recalling that $X_{{\rm can}, f}=  \textbf{Proj} \big(\bigoplus_{m\ge0}f_* \omega_X^{\otimes m}\big)$, we obtain
$$X_{{\rm can}, f} \simeq X_{\rm can} \times A.$$
Moreover, by the invariance of plurigenera and base change we also have a natural $\mathbb{C}$-algebra isomorphism
$$\bigoplus_{m\ge0}H^0(X, \omega_X^{\otimes m})\simeq \bigoplus_{m\ge0}H^0(F, \omega_F^{\otimes m}),$$
which shows that $X_{\rm can} \simeq F_{\rm can}$.

\subsection{Proof of Theorem \ref{mad}}
We start with a general set-up.  Since $Y$ is of maximal Albanese dimension, we can take a Stein factorization of the Albanese morphism $a_Y\colon Y\to\Alb(Y)$ such that $a_Y=g\circ h$, where $h\colon Y\to Z$ is birational, $g\colon Z\to \Alb(Y)$ is finite onto its image, and $Z$ is a normal projective variety. 

By \cite[Theorem 13]{Kawamata}, there exists an \'etale cover $\varphi\colon Z'\to Z$, an abelian variety $A$, and a normal projective variety $W$ such that 
$$Z'\simeq W\times A \,\,\,\,\,\,{\rm and} \,\,\,\,\,\, \kappa(W)=\dim W=\kappa(Z) = \kappa (Y).$$
 Denote the projection of $Z'$ onto $W$ by $p$. We consider the following commutative diagram, where the morphisms in the middle column are obtained by base change from $h$ and $f$ via $\varphi$ and then $\varphi'$, while on the left we have the respective fibers over a general point $w \in W$. (The horizontal maps between the two left columns are all inclusions.)
\begin{equation}\label{eq2}
\begin{tikzcd}
X'_w\rar \dar{f'_w} & X' \dar{f'} \rar{\varphi''} & X \dar{f}  \\
Y'_w\rar{i_w} \dar{h'_w} & Y' \dar{h'} \rar{\varphi'} & Y \dar{h}  \\
 A= A_w \dar \rar{} & Z' \dar{p} \rar{\varphi} & Z\\
\{w\}\rar& W
\end{tikzcd}
\end{equation}
The projective varieties $X'$ and $Y'$ are both smooth, since $\varphi$ is \'etale. We can choose $w$ sufficiently general so that $X'_w$ and $Y'_w$ are smooth, and $h'_w$ is birational. Moreover $f'_w$ is a fiber space.

After this preparation, we are ready to prove Theorem \ref{mad}. By possibly passing to a further log resolution, we may assume that $E : = X \smallsetminus U$ is a simple normal crossing divisor, and therefore
$$\kappa (U ) = \kappa (X, K_X + E).$$

We start again with part (2), which is somewhat less involved. If $T = Y \smallsetminus V$, recall that in 
(2) we are assuming that the codimension of $T$ is at least $2$.  Note that $f^{-1} (T) = E$. 
The morphism $f'$ is smooth over $Y'\smallsetminus \varphi'^{-1}(T)$, and $\codim_{Y'} \varphi'^{-1}(T)\ge 2$.
Since $\varphi''$ is \'etale, we deduce that $E':=\varphi''^*E$ has simple normal crossings, hence the pair 
$(X', E')$ is log canonical. We have 
$$\kappa(X', K_{X'}+E')=\kappa(X', \varphi''^*(K_X+E))=\kappa(X, K_X+E)=\kappa(U).$$
If $T'_w:=(\varphi'\circ i_w)^{-1}(T)$, we also know that $f'_w$ is smooth over $Y'_w\smallsetminus T'_w$ as well, while choosing 
$w$ carefully we can also ensure that  $\codim_{Y'_w} T'_w \ge 2$. We can choose $w$ sufficiently general such that
$E'_w:=E'|_{X'_w}$ has simple normal crossings, and thus 
$$\kappa(X'_w\smallsetminus E'_w)=\kappa(X'_w, K_{X'_w}+E'_w).$$
As in the proof of Theorem \ref{main}, since $W$ is of general type, after passing to a resolution $\widetilde{W}$ of $W$ and a log resolution of the pull-back of the pair $(X', E')$ to the main component of the fiber product $X' \times_W \widetilde{W}$, by \cite[Corollary 2]{Maehara}  we deduce the additivity of log Kodaira dimension 
$$\kappa(X', K_{X'}+E')=\kappa(X'_w, K_{X'_w}+E'_w)+\dim W.$$
Since $h'_w$ is a birational morphism, it is immediate to check that  $h'_w\circ f'_w$ is also smooth over the complement in $A$ of the closed subset $S_w=h'_w(T'_w\cup{\rm Exc}(h'_w))$ of codimension at least $2$, and its general fiber is $F$.

At this stage, by Theorem \ref{main}(2) we have
$$\kappa\big(X'_w\smallsetminus (h'_w\circ f'_w)^{-1}(S_w)\big)=\kappa(X'_w)=\kappa(F).$$
Since $E'_w=(f'_w)^{-1}(T'_w)\subseteq (h'_w\circ f'_w)^{-1}(S_w)$, we also have
$$\kappa\big(X'_w\smallsetminus (h'_w\circ f'_w)^{-1}(S_w)\big)\ge\kappa(X'_w\smallsetminus E'_w)\ge\kappa(X'_w),$$
and thus $\kappa(X'_w\smallsetminus E'_w)=\kappa(X'_w)=\kappa(F)$. Putting the identities above together, we deduce that
$$\kappa(U)=\kappa(X', K_{X'}+E')=\kappa(X'_w, K_{X'_w}+E'_w)+\dim W=\kappa(F)+\kappa(Y).$$
On the other hand, by the subadditivity of the Kodaira dimension \cite[Theorem 1.1]{HPS}, we have $\kappa(X)\ge\kappa(F)+\kappa(Y)$. Thus
$$\kappa(F)+\kappa(Y)=\kappa(U)\ge\kappa(X)\ge\kappa(F)+\kappa(Y).$$
Note also that since $T$ has codimension at least $2$ in $Y$, by Lemma \ref{lkd} we have $\kappa(Y)=\kappa(V)$. We 
finally obtain 
$$\kappa (X) = \kappa (U)  = \kappa (V) + \kappa (F) = \kappa (Y) + \kappa (F).$$

\medskip

We now prove (1). We note first that we can assume $Y \smallsetminus V$ to be a simple
normal crossing divisor. Let $\mu\colon \widetilde{Y}\to Y$ be a log resolution of $Y \smallsetminus V$ which is an isomorphism over $V$. In particular $\mu^{-1}(Y\smallsetminus V)$ is a divisor with simple normal crossing support, and $\widetilde{Y}$ is still of maximal Albanese dimension. We consider the commutative diagram
\[
\begin{tikzcd}
\widetilde{X}\rar \drar{\widetilde{f}} & (X\times_Y \widetilde{Y})_{\rm{main}} \dar{m} \rar{} & X \dar{f}  \\
 & \widetilde{Y}  \rar{\mu} & Y 
\end{tikzcd}
\]
where $(X\times_Y \widetilde{Y})_{\rm{main}}$ is the main component of $X\times_Y \widetilde{Y}$, and  $\widetilde{X}$ is a resolution which is an isomorphism over $(\mu\circ m)^{-1}(V)$.  The algebraic fiber space $\widetilde{f}\colon \widetilde{X}\to \widetilde{Y}$ is smooth over $\mu^{-1}(V) \simeq V$, with general fiber $F$. Thus we can assume from the start that $D:=Y\smallsetminus V$ is a simple normal crossing divisor, and 
$$\kappa (V)=\kappa(Y, K_Y+D).$$

We next show that we can also assume that there exists a birational morphism $h\colon Y\to A$, where $A$ is an abelian variety. We consider again the diagram ($\ref{eq2}$) in our set-up, but we now keep track of the divisor $D$ and its pullbacks as well.  Since $\varphi'$ is \'etale, $D':=\varphi'^*D$ is also a simple normal crossing divisor. 
For $w$ sufficiently general, in addition to all the properties described in the set-up and the proof of (1), we can also assume that 
$D'_w:=D'|_{Y'_w}$ is a simple normal crossing divisor. Since $W$ is of general type, as before we have
 the additivity of log Kodaira dimension 
$$\kappa(X, K_X+E)=\kappa(X', K_{X'}+E')=\kappa(X'_w, K_{X'_w}+E'_w)+\dim W,$$
and similarly
$$\kappa(Y, K_Y+D)=\kappa(Y', K_{Y'}+D')=\kappa(Y'_w, K_{Y'_w}+D'_w)+\dim W.$$
Now the general fiber of $f'_w$ is $F$. Thus to obtain the conclusion $\kappa (V) + \kappa (F) \ge \kappa (U)$, we only need to prove the inequality
$$\kappa(Y'_w, K_{Y'_w}+D'_w)+ \kappa (F)\ge \kappa(X'_w, K_{X'_w} + E'_w).$$
Note that the morphism $f'_w$ is smooth over $V'_w:=(\varphi'\circ i_w)^{-1}(V) = Y'_w\smallsetminus D'_w$, and 
$$\kappa(V'_w)=\kappa(Y'_w, K_{Y'_w}+D'_w),$$
while $U'_w:={f'_w}^{-1}(V'_w) = X'_w\smallsetminus E'_w$, and
$$\kappa(U'_w)=\kappa(X'_w, K_{X'_w}+E'_w).$$
This allows us indeed to assume from the start (replacing $f\colon X \to Y$ by  $f'_w\colon X'_w \to Y'_w$) that there exists a birational morphism $h\colon Y\to A$, where $A$ is an abelian variety.

The picture is now
\[
\begin{tikzcd}
X \rar{f} \arrow[bend right=40]{rr}{g} & Y \rar{h} & A
\end{tikzcd}
\]
At this stage we can forget about the previous steps in the proof, and reuse some of the notation symbols. 
The birational morphism $h$ is an isomorphism away from $Z:=h\big(\Supp (D)\cup{\rm Exc}(h)\big)$ 
and thus $g$ is smooth over $A\smallsetminus Z$. By Theorem \ref{main}(1), we then have
$$\kappa (A\smallsetminus Z) + \kappa (F) = \kappa \big(X \smallsetminus g^{-1}(Z)\big).$$
We can take a log resolution $\mu\colon Y'\to Y$ which is an isomorphism over $Y\smallsetminus \big(\Supp (D)\cup{\rm Exc} (h)\big)$, such that the support of $\mu^{-1}\big(\Supp (D)\cup{\rm Exc} (h)\big)\cup{\rm Exc}(\mu)$ is a simple normal crossing divisor $D'$.  Since ${\rm Supp} \big(h^{-1}(Z)\big)=\Supp (D) \cup{\rm Exc} (h)$,  we deduce that
$$\kappa (A\smallsetminus Z)=\kappa(Y', K_{Y'}+D').$$
Now $K_{Y'}\sim\mu^*K_Y+E'$, where $E'$ is an effective divisor whose support is ${\rm Exc}(\mu)$. Since $A$ is an abelian variety, we also have $K_Y\sim E$, where $E$ is an effective and $h$-exceptional divisor supported on ${\rm Exc} (h)$. We deduce that 
$$K_{Y'}+D'\sim\mu^*E+E'+D'\ge 0,$$ 
and since $\Supp(\mu^*E+E'+D')=\Supp(\mu^*E+E'+\mu^*D)$, we finally have
$$\kappa (A\smallsetminus Z)=\kappa(Y', K_{Y'}+D')=\kappa(Y', \mu^*E+E'+D')$$
$$=\kappa(Y', \mu^*E+E'+\mu^*D) = \kappa(Y', \mu^*(K_Y +D) + E') =\kappa(Y, K_Y+D)=\kappa (V).$$
Putting everything together, we obtain
$$\kappa (V) + \kappa (F) = \kappa (X\smallsetminus g^{-1}(Z)).$$
Finally, since $g^{-1}(Z)=f^{-1}(h^{-1}(Z))=f^{-1}(\Supp (D) \cup{\rm Exc} (h))$ contains $f^{-1}(D)$, which is supported on $E$,  we deduce that 
$$\kappa (V) + \kappa (F) \ge \kappa (X\smallsetminus E)=\kappa(U).$$

\subsection{Proof of Theorem \ref{subadditivity}}
It is clear that 
$$\kappa (X, K_X+ E)\ge \kappa (X, K_X+(f^*D)_{\rm red}),$$
hence to prove the theorem it suffices to assume $E = (f^*D)_{\rm red}$.

We first show that we can reduce to the case when $E$ has simple normal crossings. Take a log resolution $\mu\colon Y\to X$ of the pair $(X, E)$, so that $\mu^* E $ has simple normal crossing support, and 
let $K_Y=\mu^*K_X+G$ with $G$ an effective $\mu$-exceptional divisor on $Y$. We may assume that $\mu$ is an isomorphism over $X\smallsetminus E$. Thus the general fiber of $g = f\circ \mu$ is isomorphic to $F$.
\[
\begin{tikzcd}
Y \rar{\mu} \arrow[bend right=40]{rr}{g} & X \rar{f} & A
\end{tikzcd}
\]
We have that $\mu^* E \ge (g^*D)_{\rm red}$, and since $G$ is effective and $\mu$-exceptional, we deduce that
$$\kappa (X, K_X+ E)=\kappa (Y, \mu^*(K_X+ E)+G)$$
$$=\kappa (Y, K_Y+\mu^*E)\geq \kappa (Y, K_Y+(g^*D)_{\rm red})\ge  \kappa (F) + \kappa (A, D),$$
where the last inequality holds if we can prove the result in the simple normal crossing case.

We proceed therefore assuming that $E$ has simple normal crossings. Let $k \ge 1$ be the maximal coefficient of the irreducible components of $f^*D$. Thus the coefficients of the divisor $E-\frac{1}{k}f^*D$ are nonnegative and strictly smaller than $1$. Since it has simple normal crossing support, we deduce that the log pair $(X, E-\frac{1}{k}f^*D)$ is klt. We consider the divisor class
$$T:=kK_X+k E-f^*D\sim_{\mathbb{Q}} k(K_X+ E-\frac{1}{k}f^*D).$$
By \cite[Theorem 1.1]{Meng}, generalizing Theorem \ref{thm:LPS1}, there exists an isogeny $\varphi \colon A' \to A$  and 
an induced fiber product
\[
\begin{tikzcd}
X' \dar{f'} \rar{\varphi'} & X \dar{f} \\
A' \rar{\varphi} & A
\end{tikzcd}
\]
such that $ f'_* \shO_{X'}(\varphi'^*(mT))\simeq \varphi^*f_*\shO_X(mT)$ is globally generated for all $m\ge 1$. 
Note that by the base change theorem
$${\rm rk} \big( f_*\shO_X(mT) \big)=P_{mk}(F).$$
We conclude that for each $m\ge 1$ there exist injective coherent sheaf homomorphisms
$$\shO_{A'}^{\oplus P_{mk}(F)}\hookrightarrow f'_* \shO_{X'}(\varphi'^*(mT)).$$
Tensoring with $\shO_{A'}(\varphi^*(mD))$, and noting that 
$$f'_* \shO_{X'}(\varphi'^*(mT))\otimes\shO_{A'}(\varphi^*(mD))\simeq 
f'_*\shO_{X'}\big(\varphi'^*(mk(K_X+E))\big),$$
this leads to injections
$$\shO_{A'}(\varphi^*(mD))^{\oplus P_{mk}(F)}\hookrightarrow f'_* \shO_{X'}\big(mk (K_{X'}+ \varphi'^*E)\big).$$
Thus we deduce that
$$P_{mk} (X',  \varphi'^*E) \ge P_{mk} (F) \cdot P_m (A', \varphi^*D) \ge P_{mk} (F) \cdot P_m (A, D)\footnote{This is analogous to Corollary \ref{old}.}$$
for all $m \ge 0$. Since $k$ is fixed and $\varphi'^*\omega_X \simeq \omega_{X'}$, it is immediate to see that this implies 
$$\kappa(X', K_{X'} +\varphi'^*E)\ge \kappa(F)+\kappa(A, D).$$
Now $\varphi'$ is \'etale, hence $\kappa(X', K_{X'} +\varphi'^*E)=\kappa(X, K_X+ E)$, and so 
finally 
$$\kappa(X, K_X+ E) \ge \kappa(F)+\kappa(A, D).$$

\noindent
{\bf Acknowledgements.} The first author would like to thank the Department of Mathematics at Harvard University for
its hospitality during the preparation of this paper.

\section*{References}

\begin{biblist}

\bib{CP}{article}{
author={ Cao, J.}, 
author={P\u{a}un, M.},
title={Kodaira dimension of algebraic fiber spaces over abelian varieties}, 
journal={Invent. Math.},
volume={207}, 
date={2017}, 
number={1},
pages={345--387},
}

\bib{Debarre}{article}{
author={Debarre, Olivier}, 
title={On coverings of simple abelian varieties}, 
journal={Bull. Soc. Math. France},
volume={134}, 
date={2006}, 
number={2},
pages={253--260},
}


\bib{FG}{article}{
author={Fujino, O.}, 
author={Gongyo, Y.},
title={On images of weak Fano manifolds}, 
journal={Math. Z.},
volume={270}, 
date={2012}, 
pages={531--544},
}

\bib{HPS}{article}{
author={Hacon, C.},
author={Popa, M.},
author={Schnell, C.}, 
title={Algebraic
  fiber spaces over abelian varieties: around a recent theorem by {C}ao and {P}{\u{a}}un},  
journal={Contemporary Mathematics}, 
volume={712},
date={2018}, 
series={Local and global methods in Algebraic Geometry: volume in honor of L. Ein's 60th birthday}, 
pages={143--195},
}

\bib{Jiang}{article}{
	author={Jiang, Zhi},
	title={$M$-regular decomposition of pushforwards of pluricanonical bundles of pairs to abelian varieties},
	journal={Int. Math. Re. Not. IMRN}, 
	date={2022}, 
	number={13},
	pages={9708--9721},
}

\bib{Kawamata}{article}{
	author={Kawamata, Yujiro},
	title={Characterization of abelian varieties},
	journal={Compositio Math.}, 
	volume={43},
	date={1981}, 
	number={2},
	pages={253--276},
}

\bib{Kawamata2}{article}{
	author={Kawamata, Y.},
	title={Minimal models and the Kodaira dimension of algebraic fiber spaces},
	journal={J. Reine Angew. Math.}, 
	volume={363},
	date={1985}, 
	pages={1--46},
}

\bib{Kovacs}{article}{
	author={Kov\'acs, Sandor},
        title = {Families over a base with a birationally nef tangent bundle},
	journal={Math. Ann.},
	volume = {308},
	date= {1997},
	pages = {347--359},
}

\bib{LPS}{article}{
author={Lombardi, L.},
author={Popa, M.},
author={Schnell, C.}, 
title={Pushforwards of pluricanonical bundles under morphisms to abelian varieties},  
journal={J. Eur. Math. Soc.}, 
volume={22},
date={2020}, 
number={8}, 
pages={2511--2536},
}

\bib{Maehara}{article}{
	author={Maehara, K.},
	title={The weak $1$-positivity of direct image sheaves},
	journal={J. Reine Angew. Math.}, 
	volume={364},
	date={1986}, 
	pages={112--129},
}

\bib{Meng}{article}{
	author={Meng, Fanjun},
	title={Pushforwards of klt pairs under morphisms to abelian varieties},
	journal={Math. Ann.}, 
	volume={380},
	date={2021}, 
        number={3},
        pages={1655--1685},
}

\bib{Mori}{article}{
author={Mori, S.},
title={Classification of higher-dimensional varieties},
journal={Algebraic geometry, {B}owdoin, 1985 ({B}runswick, {M}aine, 1985), Proc.
  Sympos. Pure Math.}, 
volume={46}, 
publisher={Amer. Math. Soc., Providence, RI}, 
date={1987}, 
pages={269--331}, 
}

\bib{Popa}{article}{
	author={Popa, Mihnea},
	title={Conjectures on the Kodaira dimension},
	journal={preprint arXiv:2111.10900}, 
	date={2021}, 
}

\bib{PP1}{article}{
	author={Pareschi, Giuseppe},
	author = {Popa, Mihnea},
	title = {Regularity on abelian varieties {I}},
	journal={J. Amer. Math. Soc.},
	volume = {16},
	date= {2003},
	number={2},
	pages = {285--302},
}

\bib{PP2}{article}{
	author={Pareschi, G.},
	author = {Popa, M.},
	title = {Regularity on abelian varieties {III}: relationship with generic vanishing and applications},
	journal={Grassmannians, moduli spaces and vector bundles, Amer. Math. Soc., Providence, RI, Clay Math. Proc.},
	volume = {14},
	date= {2011},
	pages = {141--167},
}

\bib{PS1}{article}{
	author={Popa, Mihnea},
	author={Schnell, Christian},
	title={On direct images of pluricanonical bundles},
	journal={Algebra Number Theory}, 
	volume={8},
	date={2014}, 
	pages={2273--2295},
}

\bib{PS2}{article}{
	author={Popa, Mihnea},
	author={Schnell, C.},
	title={Kodaira dimension and zeros of holomorphic one-forms},
	journal={Ann. of Math.}, 
	volume={179},
	date={2014}, 
	number={3},
	pages={1109--1120},
}

\bib{PS3}{article}{
	author={Popa, Mihnea},
	author={Schnell, Christian},
	title={Viehweg's hyperbolicity conjecture for families with maximal variation},
	journal={Invent. Math.}, 
	volume={208},
	date={2017}, 
	number={3},
	pages={677--713},
}


\bib{Viehweg}{article}{
      author={Viehweg, Eckart},
      title={Weak positivity and the additivity of the Kodaira dimension of certain fiber spaces}, 
      journal={Adv. Studies Pure Math.}, 
      volume={1}, 
      date={1983}, 
      pages={329--353},
}      

\bib{Viehweg2}{article}{
      author={Viehweg, E.},
      title={Weak positivity and the additivity of the Kodaira dimension. II. The local Torelli map}, 
      journal={ Classification of algebraic and analytic manifolds (Katata, 1982), Progr. Math., Birkh\"auser Boston,}, 
      volume={39}, 
      date={1983}, 
      pages={567--589},
}

\bib{ViehwegZuo}{article}{
	author={Viehweg, Eckart},
	author={Zuo, Kang},
	title={On the isotriviality of families of projective manifolds over curves},
	journal={J. Algebraic Geom.}, 
	volume={10},
	date={2001}, 
	pages={781--799},
}
\end{biblist}

\end{document}